\newtheorem{theorem}{Theorem}    
\newtheorem{lemma}{Lemma} 
\newtheorem{proposition}{Proposition} 
\newtheorem{conjecture}{Conjecture} 
\newtheorem{question}{Question} 
\newtheorem{corollary}{Corollary}
\theoremstyle{definition}
\newtheorem{definition}[theorem]{Definition}
\newtheorem{remark}[theorem]{Remark}
\newtheorem*{remark*}{Remark}
\newcommand{\Z}{\mathbb{Z}}
\title{Genus non-increasing totally positive unknotting number}
\author{Tetsuya Ito}
\address{Department of Mathematics, Kyoto University, Kyoto 606-8502, JAPAN}
\email{tetitoh@math.kyoto-u.ac.jp}
\begin{document}

\begin{abstract}
The genus non-increasing totally positive unknotting number is the minimum number of crossing changes that transform a knot into the unknot, such that all the crossing changes are positive-to-negative crossing changes that do not increase the genus.
We show that the genus non-increasing totally positive unknotting number can be arbitrary large for genus one knots.
\end{abstract} 

\maketitle

\section{Introduction}

The \emph{unknotting number} $u(K)$ of a knot $K$ is the minimum number of crossing changes needed to convert $K$ to the unknot. Despite its simple definition, the unknotting number is one of the most difficult invariants in knot theory.

Typically to determine the unknotting number, we first find a unknotting sequence, a sequence of crossing change converting $K$ to the unknot. Then we confirm that the length of the sequence is the minimum by lower bounds or obstructions of the unknotting number.
 
However, in almost all cases, known lower bounds or criterion can show $2g(K)\leq u(K)$ at best.
For example, thanks to the the inequality $g_4(K) \leq u(K)$ of the slice genus $g_4(K)$, lower bounds of slice genus $g_4(K)$ from concordance invariants give a lower bound of unknotting number. However, such lower bounds are not effective when $u(K)>g(K)$ because $g_4(K)\leq g(K)$.

Similarly, for classical unknotting obstructions coming from the $S$-equivalence class of Seifert matrix (we refer to \cite{bf2} for concise summaries of classical and some other lower bounds of unknotting numbers), they actually give a lower bound of \emph{algebraic unknotting number} $u_{a}(K)$, the minimum number of crossing change to get a knot with trivial Alexander polynomial \cite{bf,bf2}. The algebraic unknotting number is bounded above by $2g(K)$ \cite[Corollary 4]{fl}, so classical unknotting obstructions are not effective when $u(K)>2g(K)$.

Knot homology theories provide more obstructions of unknotting numbers \cite{al,ae,ow,os} that do not belong to the above categories. Among them, Owen's criterion shows that the knot $K=9_{35}$, the pretzel knot $P(3,3,3)$ satisfies $u(K)=3>2=2g(K)$ \cite{ow}. 

Nevertheless, it is still fair to say that it is extremely difficult to determine the unknotting number when $u(K)>2g(K)$. According to Knotinfo \cite{lm}, among prime knots with crossing number $\leq 13$, the aforementioned knot $9_{35}$ is the unique known example of knots with $u(K)>2g(K)$.

\begin{question}
\label{question:main}
For a given $g>0$, is the set $\{ u(K) \: | \: g(K)\leq g\}$ unbounded ?
In particular, is the set $\{u(K) \: | \: g(K)=1 \}$ unbounded ?
\end{question}

We expect the answer is affirmative. For example, the 3-strand pretzel knots $P(2p+1,2q+1,2r+1)$ has genus one and it is expected that $u(P(2p+1,2q+1,2r+1)) \to \infty$ as $p,q,r \to \infty$. However, it is not even known whether $u(P(2p+1,2q+1,2r+1)) \geq 4$ holds for sufficiently large $p,q,r$.

The aim of this note is to give a positive answer to Question \ref{question:main} for a certain variant of the unknotting number.

A \emph{positive unknotting sequence} is a sequence of positive-to-negative crossing changes that makes $K$ the unknot. The \emph{totally positive unknotting number} $u_{++}(K)$ is the minimum length of positive unknotting sequence. Here by convention if $K$ has no positive unknotting sequence we define $u_{++}(K)=+\infty$. 
We study the following variant of totally positive unknotting number.

\begin{definition}
The \emph{genus non-increasing totally positive unknotting number} $u^{g}_{++}(K)$ of a knot $K$ is the minimum length of positive unknotting sequence of $K$ such that the genus is non-increasing.
\end{definition}

Although at first glance, the restriction looks too strong, but many knots have (conjecurally) minimum unknotting sequence that is both positive and genus non-increasing. For example, when $K$ is a positive braid knot, or more generally, a knot which is a plumbing of trefoils, $u^{g}_{++}(K)=u_{++}(K)=u(K)=g_4(K)=g(K)$ holds \cite{klmmms}.
The aforementioned pretzel knot $P(2p+1,2q+1,2r+1)$ has a positive and genus non-increasing unknotting sequence of length $\min\{p+q,q+r,r+p\}+1$ when $p,q,r>0$, and conjecturally this attains the unknotting number.

Our main theorem gives a lower bound of $u^{g}_{++}(K)$ for a genus one knot $K$ in terms of the HOMFLY polynomial.

\begin{theorem}
\label{theorem:main}
Let $K$ be a genus one knot and $P_K(v,z)=\sum_{i,j} h_{i,j}v^{2i}z^{2j} \in \Z[v,z]$ be the HOMFLY polynomial of $K$.
\begin{itemize}
\item[(i)] Let $m = \min \{i \: | \: h_{i,0} \neq 0\}$.  
\begin{itemize}
\item[(i-a)] $u^{g}_{++}(K)\geq m$.
\item[(i-b)] If $h_{m,0}<0$ then $u^{g}_{++}(K)=\infty$. 
\item[(i-c)] If $h_{m,0}=1$ and $h_{m',0}<0$, then $u^{g}_{++}(K)\geq m+1$. Here $m'= \min \{i>m \: | \: h_{i,0} \neq 0\}$.
\end{itemize}
\item[(ii)] Let $M = \max \{i \: | \: h_{i,0} \neq 0 \}$. 
\begin{itemize}
\item[(ii-a)] If $h_{M,0}>0$, then $u^{g}_{++}(K) \geq M$. 
\item[(ii-b)] If $h_{M,0}>1$, then $u^{g}_{++}(K) \geq M+1$.
\end{itemize}
\end{itemize}
\end{theorem}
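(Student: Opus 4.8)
The plan is to fix a shortest genus non-increasing positive unknotting sequence $K=K_0\to K_1\to\cdots\to K_n=U$ realizing $n=u^{g}_{++}(K)$ (if $u^{g}_{++}(K)=\infty$ the inequalities in (i-a), (i-c), (ii) are vacuous, and (i-b) is precisely the assertion that finiteness is impossible). Write $P_{K_i}(v,0)=P_0^{(i)}(v)\in\Z[v^{\pm2}]$, so that $P_0^{(0)}=P_K(v,0)=\sum_j h_{j,0}v^{2j}$ and $P_0^{(n)}=1$, and let $L_i$ be the oriented resolution of $K_i$ at the $i$-th crossing, a two-component link. Reading off the coefficient of $z^{0}$ in the skein relation $v^{-1}P_{K_i}-vP_{K_{i+1}}=zP_{L_i}$ at a positive-to-negative crossing change shows that the coefficient of $z^{-1}$ in $P_{L_i}$ equals $v^{-1}P_0^{(i)}-vP_0^{(i+1)}$.

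The geometric input I would establish — and which is the crux — is that an optimal such sequence can be chosen so that each oriented resolution $L_i$ bounds an embedded annulus $A_i\subset S^3$; this is where the genus non-increasing hypothesis enters, via the principle that such a crossing change is realized as a full twist on a band of a genus one Seifert surface of $K_i$, so that deleting that band leaves a disk with a single band attached, i.e.\ the annulus $A_i$, with $\partial A_i=L_i$. (This can fail for an arbitrary genus non-increasing crossing change — e.g.\ changing a Reidemeister~I kink on a trefoil — so optimality must genuinely be used.) Granting it, $L_i$ is a knot $J_i$ (a core of $A_i$) together with its $\ell_i$-framed parallel copy, and the standard formula for the lowest-order term (as $z\to0$) of the HOMFLY polynomial of a two-component link shows the $z^{-1}$-coefficient of $P_{L_i}$ equals $(v^{-1}-v)\,v^{2\ell_i}P_{J_i}(v,0)^2$. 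Equating the two expressions for this coefficient and clearing a factor of $v$ gives the recursion
\begin{equation}\label{eq:rec}
P_0^{(i)}(v)=v^2P_0^{(i+1)}(v)+(1-v^2)D_i(v),\qquad D_i(v)=\bigl(v^{\ell_i}P_{J_i}(v,0)\bigr)^2,
\end{equation}
in which every $D_i$ is a nonzero perfect square in $\Z[v^{\pm2}]$; positivity of the crossing changes is essential here, since a negative-to-positive change would instead yield $(1-v^{-2})D_i$, with the signs reversed. Telescoping \eqref{eq:rec} with $P_0^{(n)}=1$ gives
\begin{equation}\label{eq:tel}
P_K(v,0)=v^{2n}+\sum_{i=0}^{n-1}v^{2i}(1-v^2)D_i(v)=:v^{2n}+S(v).
\end{equation}

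All five statements then follow by elementary bookkeeping with \eqref{eq:tel}, using only that each $D_i$, being a perfect square of a nonzero Laurent polynomial, has strictly positive bottom and top coefficients — so each summand $v^{2i}(1-v^2)D_i$ has strictly positive bottom coefficient and strictly negative top coefficient, whence $S(v)$ has strictly positive bottom coefficient and strictly negative top coefficient when $S\neq0$ — together with the fact that $v^{2n}$ contributes the monomial $v^{2n}$. Comparing lowest degrees in \eqref{eq:tel}: no cancellation can occur at the bottom of $S$, so $m=\min\{j:h_{j,0}\neq0\}\le n$ and $h_{m,0}$ is a sum of strictly positive integers (including the contribution $1$ from $v^{2n}$ when $m=n$), hence $h_{m,0}\ge1$; this proves (i-a) and (i-b). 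If moreover $n=m$, then no summand of $S$ can have bottom degree $\le2m$: bottom degree $<2m$ would contradict the definition of $m$, and bottom degree $=2m$ would force $h_{m,0}=1+(\text{positive})\ge2$; hence $\operatorname{mindeg}_vS\ge2m+2$, so the next nonzero coefficient $h_{m',0}$ is the positive bottom coefficient of $S$, contradicting $h_{m',0}<0$ and proving (i-c). Symmetrically, comparing top degrees: if $h_{M,0}>0$, the top term of \eqref{eq:tel} cannot come from $S$ (whose top coefficient is negative when $S\neq0$), so $M\le n$, proving (ii-a); and if $M=n$ then necessarily $\operatorname{maxdeg}_vS\le2n$, so the coefficient of $v^{2M}$ in $S$ is $\le0$ and $h_{M,0}\le1$, proving (ii-b).

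The one substantial step is therefore the geometric lemma putting an optimal genus non-increasing positive unknotting sequence of a genus one knot into the form ``each crossing change is a band twist on a minimal genus Seifert surface'' (equivalently, each oriented resolution bounds an annulus); everything downstream — the skein recursion \eqref{eq:rec}, the perfect-square shape of the $D_i$ via the Lickorish–Millett evaluation, and the coefficient estimates from \eqref{eq:tel} — is routine.
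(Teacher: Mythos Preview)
Your proposal is correct and follows essentially the same route as the paper: the recursion \eqref{eq:rec} is exactly the paper's Gordian-distance-one obstruction (Theorem~\ref{theorem:Gordian}), the telescoped identity \eqref{eq:tel} is Corollary~\ref{cor:obstruction}, and your coefficient bookkeeping is the content of Lemma~\ref{lemma:positivity} (you keep the factor $(1-v^2)$ rather than dividing it out, but the argument is the same).

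One small correction on the geometric step you flag as the crux: you do not need optimality of the sequence, only that each crossing change is non-nugatory (your Reidemeister~I example is nugatory, hence already excluded in a minimal sequence). The paper establishes the annulus-bounding property for \emph{any} non-nugatory crossing change between knots of genus $\le 1$ by invoking the result of Kalfagianni--Lin \cite{kl} that a minimum-genus Seifert surface exists in the complement of the crossing circle with genus $\max\{g(K_i),g(K_{i+1})\}=1$; the crossing arc is then essential in that surface, and cutting along it yields the annulus. This is precisely the ``band-twist on a genus-one surface'' picture you describe, but the existence of such a surface compatible with the crossing is the nontrivial input.
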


Our proof of Theorem \ref{theorem:main} is based on our previous observation that the HOMFLY polynomial gives an obstruction for two genus one knots has Gordian distance one, namely, whether they are related by a single crossing change or not \cite{it}. 

In general, a repeated use of a Gordian distance one obstruction brings us to a lower bound of unknotting numbers, but it becomes much weaker and complicated when we try to treat large unknotting numbers. As we will see, it is remarkable that our Gordian distance one obstruction provides a simple and effective criterion for large unknotting numbers. 

For the genus one pretzel knot $K=P(2p+1,2q+1,2r+1)$, its HOMFLY polynomial is given by
\[ P_K(v,z)= v^{2(p+q+1)}+v^{2(q+r+1)}+v^{2(r+p+1)}-v^{2(p+q+r+1)}-v^{2(p+q+r+2)} + z^2 p^1_{K}(v)\]
where $p^1_K(v) \in \Z[v,v^{-1}]$. Hence by Theorem \ref{theorem:main} (i-a) we get the following.

\begin{corollary}
Let $p,q,r>0$. For the genus one pretzel knot $P(2p+1,2q+1,2r+1)$, $u^{g}_{++}(P(2p+1,2q+1,2r+1)) = \min\{p+q,q+r,r+p\}+1$.
\end{corollary}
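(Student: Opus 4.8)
The plan is to combine the two bounds $u^{g}_{++}(K)\le L$ and $u^{g}_{++}(K)\ge L$, where $K=P(2p+1,2q+1,2r+1)$ and $L:=\min\{p+q,q+r,r+p\}+1$. Both ingredients are close at hand: the upper bound is precisely the positive genus non-increasing unknotting sequence already announced in the introduction, while the lower bound is what Theorem \ref{theorem:main} (i-a) yields once $m=\min\{i : h_{i,0}\ne 0\}$ is read off from the displayed HOMFLY polynomial. So the corollary is essentially a clean test case for Theorem \ref{theorem:main}.

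For the lower bound, set $z=0$ in $P_K(v,z)$; the $z^{0}$-part is $\sum_i h_{i,0}v^{2i}=v^{2(p+q+1)}+v^{2(q+r+1)}+v^{2(r+p+1)}-v^{2(p+q+r+1)}-v^{2(p+q+r+2)}$. Since $p,q,r>0$ we have $L\le p+q+1<p+q+r+1<p+q+r+2$, so both negative monomials have strictly larger $v$-degree than the lowest-degree positive monomial $v^{2L}$ and cannot cancel it; at $v$-degree $2L$ only positive monomials occur, with coefficient equal to the number of the three sums $p+q,q+r,r+p$ attaining the minimum, hence a nonzero integer in $\{1,2,3\}$. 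Thus $m=L$, and Theorem \ref{theorem:main} (i-a) gives $u^{g}_{++}(K)\ge L$. (Note $h_{m,0}>0$, so (i-b) is not triggered, consistent with $u^{g}_{++}(K)<\infty$.)

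For the upper bound I would recall the sequence from the introduction and, if a self-contained account is wanted, write it out. Using invariance of $P(a,b,c)$ under permuting the parameters, assume $p+q=\min\{p+q,q+r,r+p\}$. From $P(2p+1,2q+1,2r+1)$ make $q$ crossing changes in the second twist region to reach $P(2p+1,1,2r+1)$, then $p+1$ crossing changes in the first twist region, passing through $P(1,1,2r+1)$ and ending at $P(-1,1,2r+1)$; the last knot is a two-bridge knot of determinant $|(-1)(1)+(1)(2r+1)+(2r+1)(-1)|=1$, hence the unknot. This uses $q+(p+1)=L$ changes. With the standard orientation each change is positive-to-negative (a positive odd parameter contributes only positive crossings, and one change lowers that parameter by $2$), and every intermediate knot is an odd three-pretzel knot, hence of genus $\le 1$, the genus dropping to $0$ only at the final step; so the sequence is positive and genus non-increasing, giving $u^{g}_{++}(K)\le L$.

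I expect no real obstacle beyond what is already granted: the degree bookkeeping above is elementary, and the only point that truly needs care is the no-cancellation claim, namely that no negative monomial reaches $v$-degree $2L$ — which is exactly where the hypothesis $p,q,r>0$ is used. If one insisted on a fully self-contained proof, the two substantive tasks would be deriving the closed form of $P_K(v,z)$ for this family (most cleanly by resolving one twist region with the HOMFLY skein relation and inducting on that parameter, tracking only the $z^{0}$-coefficients) and verifying the crossing signs along the unknotting sequence; neither is difficult.
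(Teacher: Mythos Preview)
Your proposal is correct and follows exactly the paper's approach: the lower bound is read off from Theorem~\ref{theorem:main}~(i-a) by identifying $m=\min\{p+q,q+r,r+p\}+1$ from the displayed zeroth coefficient polynomial, and the upper bound is the explicit positive genus non-increasing sequence already mentioned in the introduction. You simply supply more detail than the paper does on both halves---in particular your no-cancellation remark (that $p,q,r>0$ forces the negative monomials to sit at strictly higher $v$-degree than the lowest positive one) makes explicit the one point the paper leaves to the reader.
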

Consequently, we get an affirmative answer to Question \ref{question:main} for  $u^{g}_{++}$.

\begin{corollary}
For genus one knots, $u^{g}_{++}$ is unbounded.
\end{corollary}

Our result gives a supporting evidence for the conjecture $u(P(2p+1,2q+1,2r+1))= \min\{p+q,q+r,r+p\}+1$ ($p,q,r>0$). In particular, it says that if $u(P(2p+1,2q+1,2r+1))$ is smaller than the conjectural value, then a minimum unknotting sequence must contain a negative-to-positive crossing change, or, go thorough higher genus knots.

For the positive genus one pretzel knot case, we expect $u^g_{++}(K)=u(K)$ but in general these values can are different even if $u^g_{++}(K) < \infty$. Let $T_{2m}$ be the $2m$-twist knot. It is easy to see that $T_{2m}$ has unknotting number one. $T_{2m}$ can be unknotted by a single negative-to-positive crossing change. On the other hand, it also has a genus non-increasing positive unknotting sequence of length $m$. 
By direct computation, we see that the $2m$-twist knot $T_{2m}$ has HOMFLY polynomial
\[ P_{T_{2m}} (v,z)=v^{-2}-v^{2m-2}+v^{2m} + z^2 p^1_{T_{2m}}(v)\]
where $p^1_{T_{2m}}(v) \in \Z[v,v^{-1}]$. Thus by Theorem \ref{theorem:main} (ii-a) we get the following.

\begin{corollary}
For the $2m$-twist knot $K=T_{2m}$ $(m>0)$, $u(K)=1$ but $u^{g}_{++}(K)=m$. Thus the difference of $u(K)$ and $u^{g}_{++}(K)$ can be arbitrary large (even for genus one knots).
\end{corollary}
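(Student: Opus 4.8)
The plan is to reduce the corollary to the two assertions $u^{g}_{++}(T_{2m}) = m$ and $u(T_{2m}) = 1$: granting these, $u^{g}_{++}(T_{2m}) - u(T_{2m}) = m - 1 \to \infty$ as $m \to \infty$, and every $T_{2m}$ has genus one, which is exactly the content of the last sentence. I would prove the two inequalities $u^{g}_{++}(T_{2m}) \le m$ and $u^{g}_{++}(T_{2m}) \ge m$ separately.

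For the upper bound I would use the standard diagram of $T_{2m}$ consisting of a clasp and a twist region of $2m$ half-twists, with the chirality of the diagram chosen so that, with respect to the orientation of the knot, all $2m$ twist-region crossings are positive (this is also the $T_{2m}$ compatible with the HOMFLY polynomial displayed above). Changing one such crossing introduces two cancelling crossings that are removed by a single Reidemeister~II move, so the result is $T_{2m-2}$; iterating yields a sequence $T_{2m} \to T_{2m-2} \to \cdots \to T_2 \to T_0$, where $T_0$ is the unknot, consisting of exactly $m$ positive-to-negative crossing changes. Every non-trivial twist knot has genus one (it bounds an obvious once-punctured torus and has non-trivial Alexander polynomial), so along this sequence the genus equals one until the last step, where it drops to zero; in particular it is non-increasing, and hence $u^{g}_{++}(T_{2m}) \le m$.

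For the lower bound I would invoke Theorem~\ref{theorem:main}(ii-a), which applies since $T_{2m}$ has genus one. Expanding the asserted HOMFLY polynomial $P_{T_{2m}}(v,z) = v^{-2} - v^{2m-2} + v^{2m} + z^2 p^{1}_{T_{2m}}(v)$ in the form $\sum_{i,j} h_{i,j} v^{2i} z^{2j}$, the nonzero coefficients with $j = 0$ are $h_{-1,0} = 1$, $h_{m-1,0} = -1$, and $h_{m,0} = 1$, so $M = \max\{i : h_{i,0} \ne 0\} = m$ and $h_{M,0} = 1 > 0$; Theorem~\ref{theorem:main}(ii-a) then gives $u^{g}_{++}(T_{2m}) \ge M = m$, and combining with the upper bound yields $u^{g}_{++}(T_{2m}) = m$. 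It remains to note $u(T_{2m}) = 1$: the knot is non-trivial (non-trivial Alexander polynomial), while changing one crossing of the clasp in the standard diagram trivializes the clasp, after which the twist region is undone by Reidemeister moves and one is left with the unknot. Beyond citing Theorem~\ref{theorem:main}, the work here is entirely bookkeeping — tracking the signs of the twist-region crossings, checking the genera of the intermediate knots $T_{2k}$, and confirming the displayed HOMFLY formula by a direct skein-relation computation — and this last HOMFLY verification is the main (essentially the only) obstacle.
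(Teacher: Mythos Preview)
Your proposal is correct and follows essentially the same route as the paper: the upper bound via the explicit positive genus-non-increasing sequence $T_{2m}\to T_{2m-2}\to\cdots\to T_0$, the lower bound by reading off $M=m$, $h_{M,0}=1>0$ from the displayed HOMFLY formula and applying Theorem~\ref{theorem:main}(ii-a), and $u(T_{2m})=1$ via the clasp crossing. The only point worth flagging is that the unknotting crossing change at the clasp is negative-to-positive (as the paper notes), which is why it does not already give $u^{g}_{++}(T_{2m})=1$; otherwise your write-up simply makes explicit what the paper leaves to the reader.
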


%
%

\section*{Acknowledgements}
The author is partially supported by JSPS KAKENHI Grant Numbers 19K03490, 21H04428, 	23K03110. The author thanks M. Borodzik for helpful correspondence.

\section{Unknotting sequence with restrictions}

In this section we give a brief discussion for unknotting sequence with prescribed restrictions. 
 
In the following, for knots $K$ and $K'$ we denote by $K \stackrel{\varepsilon}{\longrightarrow} K'$ $(\varepsilon \in \{\pm 1\})$ if $K'$ is obtained from $K$ by a crossing change of the sign $\varepsilon$. 

\emph{An unknotting sequence} of a knot $K$ is a sequence $\mathcal{K}$ consisting of crossing changes that makes $K$ into the unknot $U$
\[ K=K_n \stackrel{\varepsilon_n}{\longrightarrow} K_{n-1} \stackrel{\varepsilon_{n-1}}{\longrightarrow} \cdots  \stackrel{\varepsilon_2}{\longrightarrow} K_1 \stackrel{\varepsilon_1}{\longrightarrow} U \]
The \emph{length} $\ell(\mathcal{K})$ of the unknotting sequence $\mathcal{K}$ defined by $\ell(\mathcal{K})=n$. 

Let $v$ be an invariant of knot that takes value in an ordered set such that the value of the unknot $v(U)$ is minimum. Typically, like the knot genus $g(K)$, we consider an invariant that takes value in non-negative integers or natural numbers.

We say that an unknotting sequence $\mathcal{K}$ is
\begin{itemize}
\item[--] \emph{positive} if $\varepsilon_i = + $ for all $i$, i.e., it consists of positive-to-negative crossing change.
\item[--] \emph{$v$ non-increasing} if $v(K_{i+1})\geq v(K_{i})$ for all $i$.
\end{itemize}

By considering unknotting sequences with particular properties, we have several variants of unknotting numbers.
\begin{definition}
For a knot $K$, we define 
\begin{itemize}
\item the \emph{totally positive unknotting number} 
\[ u_{++}(K) = \min \{\ell(\mathcal{K}) \: | \: \mathcal{K} \mbox{ is a positive unknotting sequence of }K\}.\]
\item the \emph{$v$ non-increasing unknotting number}
\[ u^v(K) =\min \{\ell(\mathcal{K}) \: | \: \mathcal{K} \mbox{ is a } v \mbox{ non-increasing  unknotting sequence of }K\}.\]
\item the \emph{$v$ non-increasing unknotting number}
\[ u^v_{++}(K) =\min \{\ell(\mathcal{K}) \: | \: \mathcal{K} \mbox{ is a } v \mbox{ non-increasing positive unknotting sequence of }K\}.\]
\end{itemize}
\end{definition}
Here we use the convention that $\min \emptyset = +\infty$.

\begin{remark}
There are many variants of unknotting numbers that take into accounts of signs. The most standard one is the \emph{positive unknotting number} $u_{+}(K)$, the minimum number of positive-to-negative crossing change among the minimum unknotting sequence of $K$ (see, for example, \cite{cl}). We refer to \cite{tr} for other variants of signed unknotting numbers. 
\end{remark}

We observe that for a natural knot invariants $v$ coming from a complexity of diagrams, every knot $K$ admits a $v$ non-increasing unknotting sequence.
\begin{theorem}
Let $v$ be a knot invariant that satisfies the following properties.
\begin{itemize}
\item[(a)] $v$ can be defined as the minimum values of invariant of knot diagrams $v_D$. There exists an invariant $v_d$ of knot diagrams such that 
\[ v(K)=\min\{v_d(D) \: | \: D \mbox{ is a diagram of }K\}\]
\item[(b)] The diagram invariant $v_d$ does not change by the crossing change. If $D'$ is obtained from $D$ by a crossing change at a crossing $c$ of $D$, then $v_d(D)=v_d(D')$. In other words, $v_d(D)$ can be regarded as an invariant of shadows, a knot diagram ignoring its over-under information.
\item[(c)] $v(K)\geq v(U)$ for all knots $K$ and $v(K)=v(U)$ holds if and only if $K=U$. 
\end{itemize}
Then every knot $K$ admits a $v$ non-increasing unknotting sequence, i.e., $u^{v}(K)$ is finite.
\end{theorem}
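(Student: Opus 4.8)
The plan is to argue by a double induction, using as parameters the value $v(K)$ itself together with an auxiliary diagrammatic quantity measuring how far a $v$-minimizing diagram of $K$ is from presenting the unknot. Call a diagram $D$ of a knot $K$ \emph{minimizing} if $v_d(D)=v(K)$; such diagrams exist by property (a). The one external input is the classical fact that any knot diagram can be converted into a \emph{descending} diagram by changing finitely many crossings (fix a basepoint and orientation, traverse the diagram once, and switch every crossing first encountered along an understrand), together with the fact that a descending diagram always represents the unknot $U$. Consequently, if $K\neq U$ then no minimizing diagram of $K$ is descending, and it makes sense to define $n(K)$ to be the minimum, over all minimizing diagrams $D$ of $K$, of the number of crossings one must change in $D$ to make it descending; then $n(K)\ge 1$ for $K\neq U$, and we set $n(U)=0$.

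The substantive point is the following consequence of property (b): if $D$ is a minimizing diagram of $K$ and $D'$ is obtained from $D$ by a single crossing change, yielding a diagram of a knot $K'$, then $v_d(D')=v_d(D)=v(K)$, hence $v(K')\le v_d(D')=v(K)$. In other words, \emph{any} crossing change performed on a $v$-minimizing diagram is automatically $v$ non-increasing. Moreover, in the borderline case $v(K')=v(K)$ the diagram $D'$ is itself a minimizing diagram, now of $K'$.

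Now I would run the induction on the pair $(v(K),n(K))$ ordered lexicographically, which is well-founded under the paper's standing convention that $v$ takes values in $\mathbb{N}$ (more generally, whenever the value set has no infinite descending chain below $v(K)$). If $v(K)=v(U)$ then $K=U$ by property (c), and the empty sequence works. Otherwise $K\neq U$, so $n(K)\ge 1$: choose a minimizing diagram $D$ of $K$ realizing $n(K)$ together with a crossing set $\sigma$ of size $n(K)$ whose change makes $D$ descending, and pick any $c\in\sigma$. Let $D'$ be $D$ with the crossing $c$ changed and let $K'$ be its knot, so $K\to K'$ is a crossing change and $v(K')\le v(K)$. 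If $v(K')<v(K)$, the inductive hypothesis applies to $K'$ via the first coordinate. If $v(K')=v(K)$, then $D'$ is a minimizing diagram of $K'$ and $\sigma\setminus\{c\}$ is a set of $n(K)-1$ crossings of $D'$ whose change makes $D'$ descending (changing a set of crossings is order-independent), so $n(K')\le n(K)-1$ and the inductive hypothesis applies to $K'$ via the second coordinate. In either case, prepending the $v$ non-increasing crossing change $K\to K'$ to a $v$ non-increasing unknotting sequence of $K'$ produces one for $K$; since the lexicographic order is well-founded this recursion terminates, so the resulting sequence is finite and $u^v(K)<\infty$.

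The conceptual obstacle worth flagging is why a plain induction on $v(K)$ alone fails: a nontrivial knot need not admit any single crossing change that strictly decreases $v$, so one needs the secondary counter $n(K)$ to guarantee progress at every step. The mechanism that makes the bookkeeping close is precisely the last observation of the second paragraph — a crossing change on a minimizing diagram either strictly drops $v$, or else preserves minimality, in which case it can be chosen (by taking $c\in\sigma$) so as to strictly drop $n$. Everything else (existence of minimizing diagrams, the descending-diagram lemma, order-independence of crossing changes) is standard, so the write-up should be short once this bi-parameter induction is set up correctly.
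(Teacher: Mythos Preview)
Your proof is correct and rests on the same key observation as the paper's: any crossing change performed on a $v$-minimizing diagram is automatically $v$ non-increasing. The difference lies only in how the induction is organized. The paper inducts on $v(K)$ alone: starting from a minimizing diagram $D$, it takes the \emph{entire} sequence of crossing changes $D=D_n\to D_{n-1}\to\cdots\to D_0$ turning $D$ into a diagram of the unknot, lets $K_i$ be the knot represented by $D_i$, notes that $v(K_i)\le v_d(D_i)=v_d(D)=v(K)$ for all $i$, and then picks the first index $i$ with $v(K_i)<v(K)$ (such an $i$ exists since $v(K_0)=v(U)<v(K)$). The initial segment $K=K_n\to\cdots\to K_i$ is $v$ non-increasing, and the inductive hypothesis applied to $K_i$ finishes. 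So your closing remark that ``a plain induction on $v(K)$ alone fails'' is not accurate: it works, provided the inductive step is permitted to consume several crossing changes at once rather than exactly one. Your auxiliary counter $n(K)$ and the lexicographic induction are correct but unnecessary; they amount to replaying the paper's argument one crossing at a time.
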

\begin{proof}
We prove the assertion by induction on $v(K)$. Thanks to the property (c), $v(K)=v(U)$ case is obvious.
 
Let $D$ be a diagram of $K$ that attains the minimum so $v_d(D)=v(K)$. Since every knot diagram $D$ can be converted to a diagram representing the unknot $U$ by applying crossing change, there is a sequence of diagrams
\[ D =D_n \stackrel{\varepsilon_n}{\longrightarrow} D_{n-1} \stackrel{\varepsilon_{n-1}}{\longrightarrow} \cdots \stackrel{\varepsilon_2}{\longrightarrow} D_1  \stackrel{\varepsilon_1}{\longrightarrow} D_0 \]
such that $D_0$ represents the unknot $U$.
Here $D_i \stackrel{\varepsilon_i}{\longrightarrow} D_{i-1}$ means that the diagram $D_{i-1}$ is obtained from from the diagram $D_{i}$ by changing a crossing of sign $\varepsilon_{i}$.
 
Let $K_i$ be the knot represented by $D_i$.
By the property (b) $v_d(D)=v_d(D_n)=\cdots=v_d(D_1)=v_d(D_0)$ so by the property (a)
 \[ v(D_{n}) =v(K) \geq v(K_1),\ldots,v(K_{n})\geq v(K_0)=v(U)\]
Thus there is $i\geq 0$ such that $v(K)=v(K_n)=\cdots = v(K_{i+1})>v(K_i)$ holds.
By induction, there is a $v$ non-increasing unknotting sequence of $K_i$ hence $K$ has a $v$ non-increasing unknotting sequence.
\end{proof}

\begin{corollary}
For the crossing number $c$, the braid index $\mathsf{braid}$, the bridge index $\mathsf{bridge}$ and the canonical genus $g_c$, \footnote{The canonical genus $g^{c}$ is the minimum genus of a Seifert surface obtained by Seifert's algorithm.}, $u^{c},u^{\sf braid}, u^{\sf bridge},u^{g_c}$ are always finite.
\end{corollary}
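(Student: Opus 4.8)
The plan is to invoke the preceding theorem four separate times, so the whole proof reduces to checking, for each of $v\in\{c,\mathsf{braid},\mathsf{bridge},g_c\}$, that some diagram invariant $v_d$ satisfies properties (a), (b), (c). Property (c) is standard in all four cases: one has $c(U)=g_c(U)=0$ and $\mathsf{braid}(U)=\mathsf{bridge}(U)=1$, each of these values is a lower bound for the corresponding invariant on every knot, and in each case the minimal value is attained only by the unknot --- $c(K)=0$ is the definition of triviality; a knot of braid index (resp.\ bridge index) one is the closure of a $1$-braid (resp.\ the union of two unknotted arcs) and hence trivial; and $g_c(K)=0$ means $K$ bounds a disc produced by Seifert's algorithm, so $K=U$. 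Thus the real content is simply to name the correct diagrammatic quantity $v_d$ in each case and to observe that it is insensitive to the over/under information at a crossing.

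For the crossing number take $v_d(D)$ to be the number of crossings of $D$: then (a) is the definition of $c$ and (b) is immediate. For the canonical genus take $v_d(D)=(c(D)-s(D)+1)/2$, where $c(D)$ is the number of crossings and $s(D)$ the number of Seifert circles of $D$; this is the genus of the surface produced by Seifert's algorithm on $D$, so (a) holds by definition of $g_c$, while (b) holds because both $c(D)$ and $s(D)$ depend only on the oriented projection of $D$ and so are unchanged by a crossing change. For the braid index take $v_d(D)=s(D)$: property (a) is Yamada's theorem that the minimal number of Seifert circles over all diagrams of $K$ equals the braid index, and (b) is once more the remark that $s(D)$ is a shadow invariant.

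The one case that needs a little care is the bridge index, because the naive candidate --- the number of maximal overpasses of $D$ --- genuinely depends on the crossing data: for the standard three-crossing diagram of the trefoil it equals $3$, whereas a single crossing change merges the over-arcs and drops it to $1$, so this candidate fails (b). Instead I would use the height-function model of the bridge number: fix a generic direction in the plane and, for a diagram $D$ placed in general position with respect to it, let $v_d(D)$ be the number of local maxima of the underlying immersed plane curve of $D$ in that direction. Property (a) is the classical identity $\mathsf{bridge}(K)=\min_D v_d(D)$ --- one inequality by projecting an $\R^3$-realization of $K$ onto a plane containing the height axis, the other by pushing a given diagram slightly off its plane --- and property (b) is now transparent, since $v_d(D)$ is a function of the shape of the plane curve alone and ignores which strand is over at each crossing. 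Applying the theorem to these four invariants then yields finiteness of $u^{c}$, $u^{\mathsf{braid}}$, $u^{\mathsf{bridge}}$ and $u^{g_c}$. The only genuine obstacle is this last one: locating a diagrammatic model for the bridge number that is simultaneously correct and crossing-change invariant --- that is, resisting the overpass description in favour of the maxima count.
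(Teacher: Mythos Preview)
Your proof is correct and follows exactly the approach the paper intends: the corollary is stated without proof as an immediate consequence of the preceding theorem, and you have supplied the natural choices of shadow invariant $v_d$ (crossing count, Seifert-circle count via Yamada's theorem, Seifert-algorithm genus, and local-maxima count for bridge number) together with the routine verification of conditions (a)--(c). Your observation that the overpass count fails (b) and must be replaced by the height-function model is the only nontrivial point, and it is handled correctly.
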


We expect crossing number non-increasing unknotting number is always equal to $K$.

\begin{conjecture}
\label{conj:ug=u}
$u^{c}(K)=u(K)$.
\end{conjecture}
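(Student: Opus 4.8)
The inequality $u^{c}(K)\geq u(K)$ is immediate from the definitions, since $u^{c}$ is a minimum taken over a restricted class of unknotting sequences; the content of the conjecture is the reverse inequality $u^{c}(K)\leq u(K)$, i.e.\ that every knot admits a crossing number non-increasing unknotting sequence of \emph{minimum} length. The plan is to prove this by induction on $u(K)$: the base case $u(K)=0$ is trivial, and the inductive step reduces at once to the following \emph{descent claim} --- if $u(K)=n\geq 1$, then there is a knot $K'$ with $K \stackrel{\varepsilon}{\longrightarrow} K'$ for some $\varepsilon\in\{\pm 1\}$, $u(K')=n-1$, and $c(K')\leq c(K)$. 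Indeed, granting the descent claim, by induction $K'$ has a crossing non-increasing unknotting sequence of length $n-1$, and prepending the crossing change $K \stackrel{\varepsilon}{\longrightarrow} K'$ keeps it crossing non-increasing precisely because $c(K)\geq c(K')$, so $u^{c}(K)\leq n$.

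Everything therefore comes down to the descent claim, and this is where essentially all of the difficulty lies. A shortest unknotting sequence of $K$ can always be realized on a single diagram of $K$ carrying $n$ distinguished crossings, but that diagram is in general far from minimal, and there is no a priori control on the crossing numbers of the intermediate knots --- a shortest unknotting sequence may pass through knots of larger crossing number than $K$ before coming back down. The descent claim asks instead that \emph{some} unknotting-number-decreasing crossing change can be performed on $K$ without raising the crossing number, and the natural source of such a change is a crossing of a minimal diagram of $K$, since changing a crossing of a minimal diagram produces a knot with a diagram of at most $c(K)$ crossings. Thus the descent claim would follow from the Bernhard--Jablan phenomenon, that a minimal diagram always contains a crossing whose change drops the unknotting number; but that is a well-known hard open problem, and recent work suggests it may fail in general. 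The present conjecture is, however, genuinely weaker: $u^{c}$ allows the intermediate diagrams to be non-minimal so long as the crossing \emph{number} does not increase, so there may be room to establish $u^{c}(K)=u(K)$ even where the minimal-diagram statement fails.

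Concretely, I would first establish the descent claim, and hence the conjecture, for the classes of knots where both $u(K)$ and $c(K)$ are understood and a shortest unknotting sequence is already known to live on a reduced diagram: two-bridge knots, (positive) alternating knots, torus knots, and plumbings of trefoils; for these one should be able to exhibit explicitly a crossing whose change both reduces $u$ and does not raise $c$, for instance a crossing in a short twist region of a reduced alternating diagram. As a fallback that already refines the Corollary above, I would aim at a quantitative bound $u^{c}(K)\leq u(K)+\phi(c(K))$ for an explicit function $\phi$, obtained by bounding the excess crossings created when a non-minimal unknotting diagram is simplified back to a minimal one. Finally I would verify $u^{c}(K)=u(K)$ by computer for all knots with at most $12$ or $13$ crossings, for which $u(K)$ is tabulated in \cite{lm}, which would at least rule out small counterexamples and is the most likely place to discover whether the conjecture is even true. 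The fundamental obstacle throughout is the lack of any genuine two-way relationship between the unknotting number and the crossing number --- exactly the relationship that the descent claim would supply.
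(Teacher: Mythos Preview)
The statement you are addressing is labelled a \emph{Conjecture} in the paper, and the paper gives no proof of it; it explicitly presents $u^{c}(K)=u(K)$ as an open weakening of the Bernhard--Jablan conjecture, noting that the latter is now known to be false \cite{bh}. Your proposal is not a proof either. You correctly reduce the conjecture to your ``descent claim'' (indeed the two are equivalent, not merely one-directional), and you correctly observe that the descent claim would follow from Bernhard--Jablan. But from that point on you offer only a research programme---special families, a hoped-for quantitative bound $u^{c}(K)\leq u(K)+\phi(c(K))$, computer checks through $13$ crossings---none of which, even if carried out, would establish the descent claim in general.

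One correction: you say ``recent work suggests [Bernhard--Jablan] may fail in general''; in fact Brittenham--Hermiller have exhibited a counterexample, so it \emph{does} fail. That is exactly why the paper states $u^{c}(K)=u(K)$ as a separate conjecture rather than a corollary. Your observation that the crossing-number-non-increasing condition is genuinely weaker than requiring the change on a minimal diagram is precisely the point the paper is making, but neither you nor the paper closes the gap---so there is nothing here to compare against a ``paper's own proof,'' because there is none.
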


The Bernhard-Jablan unknotting conjecture states that every knot $K$ has a minimum crossing diagram $D$ with a crossing change that yields a knot with $u(K')=u(K)-1$. Although this obviously implies $u^{c}(K)=u(K)$, unfortunately, Bernhard-Jablan unknotting conjecture is known to be false in general \cite{bh}. Conjecture  \ref{conj:ug=u} can be seen as a modification of Bernhard-Jablan unknotting conjecture.

On the other hand, we do not know whether the \emph{genus non-increasing unknotting number} $u^{g}(K)$ is finite in general.
\begin{question}
Does every knot $K$ admits genus non-increasing unknotting sequence?
\end{question}

We just observe that we can unknot a knot $K$ without increasing the genus so much.

\begin{proposition}
For every knot $K$, there is an unknotting sequence $\mathcal{K}$
\[ K=K_n \stackrel{\varepsilon_n}{\longrightarrow} K_{n-1} \stackrel{\varepsilon_{n-1}}{\longrightarrow} \cdots  \stackrel{\varepsilon_2}{\longrightarrow} K_1 \stackrel{\varepsilon_1}{\longrightarrow} U \]
such that $g(K_i)\leq g(K)+2$ for all $i$.
\end{proposition}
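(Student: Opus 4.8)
The plan is to realize $K$ as the boundary of a minimal genus Seifert surface in ``disk with bands'' form and to unknot it by simplifying the bands one at a time, tracking the genus carefully during each local modification.

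First I would fix a Seifert surface $F$ of $K$ with $g(F)=g(K)=:g$ and isotope it into standard form $F=D_0\cup B_1\cup\cdots\cup B_{2g}$, a disk $D_0$ with $2g$ bands attached to $\partial D_0$ grouped into $g$ interleaved handle pairs $(B_{2i-1},B_{2i})$. Drawing $D_0$ as a round disk in a plane with all of the bands lying above it, one has $K=\partial F$, and when every band is \emph{standard} --- each $B_i$ unknotted, unlinked, standardly framed, and each handle pair standardly clasped --- the surface $F$ is the standard genus $g$ Seifert surface of the unknot, so $\partial F=U$. Hence it is enough to bring the configuration of bands into the standard one by crossing changes on $K$. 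I would do this one band at a time: assuming $B_1,\dots,B_{i-1}$ are already standard, I would unknot the core arc of $B_i$, unlink it from itself and from the remaining cores, and correct its framing, by crossing changes taking place among the core arcs and not meeting $D_0$. Each such crossing change --- a strand of one band passing over a strand of another (or of the same) band --- corresponds to a $2\times 2$ block of four crossings of $K$, and I would perform the four one crossing at a time.

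The remaining, and genuinely delicate, point is the genus estimate. Once a full block of four crossings has been switched, the band involved has merely been re-routed over or under another, so $K$ still bounds a disk-with-$2g$-bands surface of genus exactly $g$; thus between consecutive blocks the genus is back to $g$, and concatenating the per-band sequences gives an unknotting sequence of $K$. At an \emph{intermediate} stage of a block --- one, two or three of the four crossings switched --- the band structure of $F$ is disturbed only inside a small ball meeting at most two of the bands, i.e.\ at most two of the disks that make up $F$. Replacing those one or two disks by an orientable surface spanning the partially switched $2\times 2$ tangle is a local surgery on $F$ whose genus cost I expect to be at most two: one handle to absorb the partially formed clasp, and, because the two long edges of a band inherit opposite orientations from $K$, possibly one more to restore orientability of the regluing. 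This would produce a Seifert surface of the intermediate knot of genus at most $g+2$ (the ``$+2$'' looks unavoidable for a local argument of this kind, since already a single crossing change can raise the genus by $2$, as for $6_2$, which has $u=6_2)=1$ but $g(6_2)=2$). Granting this local bound --- a finite case check over the tangles obtained from the trivial $2\times 2$ tangle by switching at most three crossings --- every knot appearing in the unknotting sequence bounds a surface of genus at most $g(K)+2$, which is the proposition. The heart of the matter, and where I would expect to spend the most effort, is making this local genus-$2$ estimate precise, possibly by prescribing the order in which the four crossing changes of a block are carried out so that no intermediate tangle is worse than expected.
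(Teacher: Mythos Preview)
Your approach is essentially the paper's: present $K$ as the boundary of a minimal-genus disk-with-bands Seifert surface, unknot by band-framing changes and band-pass moves (both are moves of the surface, hence genus non-increasing between blocks), and realize each band-pass by four crossing changes whose intermediate knots have genus at most $g+2$. The local genus-$2$ bound you correctly flag as the crux is handled in the paper not by a case analysis of partially switched $2\times 2$ tangles but by exhibiting a single explicit local modification of the surface near the band crossing (given in a figure) that carries a Seifert surface of genus $\leq g+2$ through all four intermediate stages.
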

\begin{proof}

Let $S$ be a minimum genus Seifert surface of $K$ which we view as as a 2-disk attached $2g(K)$ 1-handles. We put the surface $S$ so that the 1-handles are projected to a parallel of tangles as in Figure \ref{fig:surface}.

\begin{figure}[htbp]
\begin{center}
\includegraphics*[width=70mm]{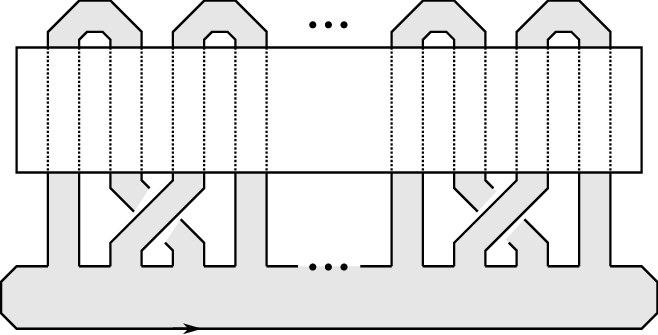}
\begin{picture}(0,0)
\end{picture}
\caption{A knot $K$ and minimum genus Seifert surface. A box represents a parallel of tangle.} 
\label{fig:surface}
\end{center}
\end{figure} 

From this diagram, there is a sequence of knots $K=K_n \rightsquigarrow K_{n-1} \rightsquigarrow \cdots  \rightsquigarrow K_1 \rightsquigarrow K_{0}= U $
where $K_{i+1} \rightsquigarrow K_i$ means that $K_{i}$ is obtained from $K_{i+1}$ by a crossing change that changes the framing of 1-handles (Figure \ref{fig:band-pass} (a)), or, a band-pass move that changes the over-under information of $1$-handles (Figure \ref{fig:band-pass} (b)).
Since these moves can be seen as a move of surfaces, $g(K_{i+1}) \geq g(K_{i})$ holds for all $i$.

\begin{figure}[htbp]
\begin{center}
\includegraphics*[width=70mm]{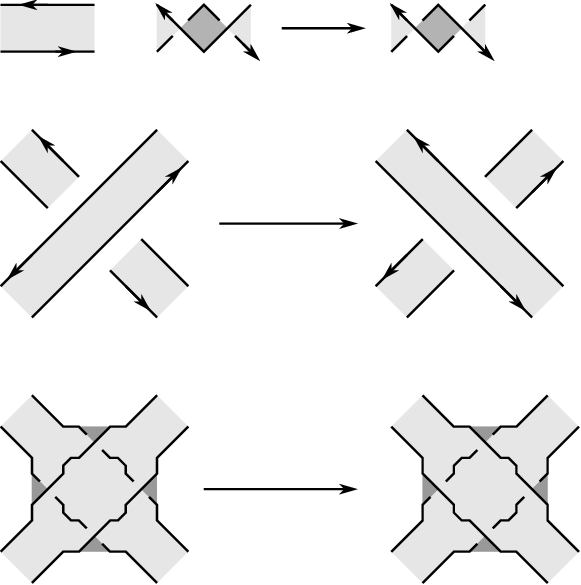}
\begin{picture}(0,0)
\put(-225,190) {(a)}
\put(-225,120) {(b)}
\put(-225,50) {(c)}
\end{picture}
\caption{(a) crossing change (b) band-pass move (c) band-pass move realized by four crossing changes that increases the genus by at most two} 
\label{fig:band-pass}
\end{center}
\end{figure} 

For each band-pass move, by slightly changing the surface $S$ near the crossing of bands, we see that the band-pass move $K_{i+1} \rightsquigarrow K_i$ is realized by four crossing changes
\[ K_{i+1}  \stackrel{\varepsilon_1}{\longrightarrow} K'_i  \stackrel{\varepsilon_1}{\longrightarrow} K''_i \stackrel{\varepsilon_1}{\longrightarrow} K'''_i  \stackrel{\varepsilon_1}{\longrightarrow} K_{i+1}\]
so that $g(K'_i),g(K''_i),g(K'''_i)\leq g(K_i)+2$ (Figure \ref{fig:band-pass} (c)). This shows that existence of unknotting sequence that increases the genus by at most two.    
\end{proof}

Besides the signs and invariant non-increasing conditions, it is also interesting to discuss unknotting sequence within a particular class of knots. For example, in \cite{klmmms} unknotting sequence within positive fibered knot is studied. They showed that, for a positive fibered knot $K$, the minimum length of unknotting sequence that consists of positive fibered knots is not equal to the unknotting number.

\section{HOMFLY polynomial obstruction of genus non-increasing unknotting number}
\label{section:homfly}

Let $P_K(v,z)$ be the HOMFLY polynomial of a knot or link $K$, defined by the skein relation
\[ v^{-1} P_{K_+}(v,z) -  vP_{K_-}(v,z)= z P_{K_0}(v,z), \quad P_{\sf Unknot}(v,z) =1. \]
From the skein relation one can check that $(v^{-1}z)^{\#K-1}P_K(v,z) \in \Z[v^{-2},v^2][z^{2}]$ so we may write
\[ P_K(v,z)=(v^{-1}z)^{-\#K + 1} \sum_{i=0} p^{i}_K(v)z^{2i}, \qquad p^{i}_K(v) \in \Z[v^2,v^{-2}]. \]
Here we denote by $\#K$ the number of components of $K$. 
We call the polynomial $p^{i}_K(v)$ the \emph{$i$-th coefficient (HOMFLY) polynomial} of $K$.

It is easy to check that for a knot $K$, $p^{0}_K(1)=1$ and $\frac{dp^0_K}{dv}(1)=0$ hold. Conversely, it is known that every polynomial $f(v) \in \Z[v^2,v^{-2}]$ with $f(1)=1$ and $f'(1)=0$ can be realized as the zeroth coefficient polynomial of some knots \cite{ka}.

The skein relation tells us that the zeroth coefficient polynomial $p^{0}_K(v)$ satisfies the following remarkable and simpler skein relations
\begin{equation}
\label{eqn:skein}
v^{-2} p^0_{K_+}(v) - p^{0}_{K_-}(v)= 
\begin{cases} p^0_{K_0}(v) & \mbox{if } \delta = 0,\\
0 & \mbox{if } \delta=1. 
\end{cases} 
\end{equation} 
where we define $\delta=0$ if two strands in the skein crossing belong to the same component, and $\delta=1$ otherwise. 
This says that for an $n$-component link $L=K_1 \cup \cdots \cup K_n$, its $0$-th coefficient polynomial is given by
\begin{equation}
\label{eqn:link}
p^{0}_{L}(v)= (v^{-2}-1)^{n-1}v^{2\sum_{i<j}lk(K_i,K_j)} p^{0}_{K_1}(v)p^{0}_{K_2}(v)\cdots p^{0}_{K_n}(v)
\end{equation}

Let $a_2(K)$ be the coefficient of $z^{2}$ in the Conway polynomial $\nabla_K(z)$ of a knot $K$. In \cite{it} we gave the following obstruction for genus one knots to related by a single crossing change.

\begin{theorem}\cite[Theorem 1.1]{it}
\label{theorem:Gordian}
Let $K$ be a genus one knot. Assume that a knot $K'$ is obtained from $K$ by a crossing change at a non-nugatory crossing $c$ of sign $\varepsilon$. If $g(K')\leq 1$, then there exists $f(v) \in \Z[v^{2},v^{-2}]$ such that
\[v^{-\varepsilon}p^{0}_K(v) - v^{\varepsilon}p^{0}_{K'}(v) = \varepsilon (v^{-1}-v)v^{2\varepsilon(a_2(K)-a_2(K'))}f(v)^2 \]
and that $f(1)=1$ and $f'(1)=0$.
\end{theorem}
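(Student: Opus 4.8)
The plan is to reduce the asserted identity to one geometric fact---that the two components of the oriented resolution at $c$ are isotopic knots---and to obtain everything else from the skein relations already recorded. Since $c$ is a self-crossing of the knot $K$, its two strands lie on the same component, so $\delta=0$ and the oriented resolution $K_0=L_1\cup L_2$ is a two-component link; write $\lambda=lk(L_1,L_2)$. First I would feed the skein relation \eqref{eqn:skein} into the left-hand side of the target formula. Separating the cases $\varepsilon=+$ (so $K=K_+$, $K'=K_-$) and $\varepsilon=-$ (so $K=K_-$, $K'=K_+$), a short elimination gives, in both cases,
\[
v^{-\varepsilon}p^0_K(v)-v^{\varepsilon}p^0_{K'}(v)=\varepsilon\,v\,p^0_{K_0}(v).
\]
Applying the $n=2$ instance of \eqref{eqn:link} to $p^0_{K_0}$ and using $\varepsilon v(v^{-2}-1)=\varepsilon(v^{-1}-v)$ turns this into
\[
v^{-\varepsilon}p^0_K(v)-v^{\varepsilon}p^0_{K'}(v)=\varepsilon(v^{-1}-v)\,v^{2\lambda}\,p^0_{L_1}(v)\,p^0_{L_2}(v).
\]

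To identify the exponent I would invoke the Conway skein relation $\nabla_{K_+}-\nabla_{K_-}=z\nabla_{K_0}$ together with $\nabla_{K_0}(z)=\lambda z+O(z^3)$ for a two-component link, which gives $a_2(K_+)-a_2(K_-)=\lambda$. Reading this in each sign case yields $a_2(K)-a_2(K')=\varepsilon\lambda$, hence $\lambda=\varepsilon(a_2(K)-a_2(K'))$ and $v^{2\lambda}=v^{2\varepsilon(a_2(K)-a_2(K'))}$, exactly the power in the statement. At this point the theorem is equivalent to the assertion that $p^0_{L_1}(v)\,p^0_{L_2}(v)$ is the square of a Laurent polynomial $f$ with $f(1)=1$ and $f'(1)=0$; and if $L_1$ and $L_2$ are isotopic then $f=p^0_{L_1}=p^0_{L_2}$ does the job, since $f(1)=1$ and $f'(1)=0$ hold automatically for the zeroth coefficient polynomial of any knot.

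The geometric heart is therefore to prove that $L_1$ and $L_2$ are isotopic. I would realize $c$ as one band of a genus one Seifert surface $S$ coming from Seifert's algorithm, so that $\chi(S)=-1$ and the oriented resolution at $c$ is exactly the deletion of that band. The result is a Seifert surface $S'$ for $K_0$ with $\chi(S')=\chi(S)+1=0$ and two boundary circles $L_1,L_2$. If $S'$ had a disk component, one of $L_1,L_2$ would bound a disk disjoint from the other, so $K_0$ would be split and $c$ would be nugatory, contrary to hypothesis. Since a connected orientable surface with a single boundary circle has odd Euler characteristic, two such pieces cannot have Euler characteristics summing to $0$; together with the exclusion of disk components this forces $S'$ to be connected, and a connected orientable surface with $\chi=0$ and two boundary circles is an annulus. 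Hence $L_1$ and $L_2$ cobound an annulus, are isotopic, and satisfy $p^0_{L_1}=p^0_{L_2}$. Substituting $f=p^0_{L_1}$ and $\lambda=\varepsilon(a_2(K)-a_2(K'))$ into the displayed identity recovers the theorem.

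The step I expect to be the main obstacle is the opening move of the last paragraph: arranging $c$ to appear as a single band of a genus one Seifert surface compatibly with the oriented resolution. A given diagram need not present a minimal genus surface having $c$ among its bands, and this is exactly where the hypothesis $g(K')\le1$ is needed---alongside non-nugatoriness---to guarantee that such an adapted surface exists and that the cut surface is the connected annulus rather than a split union of a disk and a once-punctured torus (the latter would make $p^0_{L_1}p^0_{L_2}$ a non-square in general and so would break the theorem). Making this surface-selection argument precise, rather than the skein bookkeeping of the first two paragraphs, is where the real work lies.
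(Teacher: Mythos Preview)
Your algebraic reduction is correct and coincides with the paper's: the skein identity \eqref{eqn:skein} gives $v^{-\varepsilon}p^0_K-v^{\varepsilon}p^0_{K'}=\varepsilon v\,p^0_{K_0}$, formula \eqref{eqn:link} turns $p^0_{K_0}$ into $(v^{-2}-1)v^{2\lambda}p^0_{L_1}p^0_{L_2}$, and the Conway skein relation identifies $\lambda=\varepsilon(a_2(K)-a_2(K'))$. You have also correctly isolated the geometric core of the argument: one must show that $L_1$ and $L_2$ cobound an embedded annulus, so that $p^0_{L_1}=p^0_{L_2}$ and the product is a square.

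The gap you flag in your final paragraph is genuine, and the route you sketch through Seifert's algorithm does not close it: an arbitrary diagram of $K$ containing $c$ need not have canonical genus one, and there is no a priori reason for $c$ to sit as a band of any minimal genus Seifert surface. (Your side argument that a disk component of $S'$ would force $K_0$ to be split, hence $c$ nugatory, is also not justified as stated.) The paper avoids this problem by working relative to the \emph{crossing disk} $D$ of $c$: it takes $S$ to be a minimum genus Seifert surface for $K$ in $S^{3}\setminus\partial D$ and invokes a theorem of Kalfagianni--Lin \cite{kl}, which asserts $g(S)=\max\{g(K),g(K')\}$. The hypothesis $g(K')\le 1$ then forces $g(S)=1$; non-nugatoriness makes the arc $S\cap D$ essential in the once-punctured torus $S$; and cutting $S$ along that arc yields the desired annulus bounding $K_0$. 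This external input from \cite{kl} is exactly the missing ingredient you were looking for, and once it is in place the remainder of your argument goes through verbatim.
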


Although in \cite{it} we stated the theorem for the assumption that $g(K)=g(K')=1$, the same argument works for the weaker assumption that $g(K') \leq g(K)=1$. For convenience of the reader, we repeat the proof.
\begin{proof}
We prove the theorem for the case $\varepsilon=+$ since $\varepsilon=-$ case is similar.

Let $K_0$ be the link obtained by smoothing the crossing $c$ and let $D$ be its crossing disk, a disk whose interior intersects with $K$ at two points that is the preimage of the crossing $c$, such that $lk(K,\partial D)=0$ (Figure \ref{fig:crossing} (a)). Let $S$ be a minimum genus oriented surface in $S^{3} \setminus \partial D$ that bounds $K$. By \cite[Theorem 2.1]{kl}. $g(S)=\max\{g(K),g(K')\}$ holds. Since we are assuming that $g(K') \leq g(K)=1$, it follows that $g(S)=1$. The crossing $c$ is non-nugatory implies that $S \cap D$ is an essential arc in $S$ hence by cutting $S$ along $S \cap D$, we get an annulus that bounds $K_0$ (Figure \ref{fig:crossing} (b)). This shows that the two components of $K_0$ are the same knot, say $K''$. Thus by \eqref{eqn:link}
\[ p^{0}_{K_0}(v)=(v^{-2}-1)v^{2lk(K_0)}(p^0_{K''}(v))^2.\]
By the skein relation of the conway polynomial, $lk(K_0) = a_2(K)-a_2(K')$ hence by the skein relation \eqref{eqn:skein} of the zeroth coefficient polynomial we conclude
\[v^{-1}p^{0}_K(v) - v p^{0}_{K'}(v) = (v^{-1}-v)v^{2(a_2(K)-a_2(K'))}(p^0_{K''}(v))^2 \]
as desired.

\begin{figure}[htbp]
\begin{center}
\includegraphics*[width=70mm]{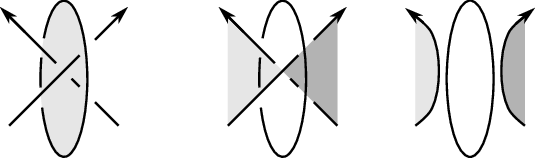}
\begin{picture}(0,0)
\put(-225,55) {(a)}
\put(-182,10) {$D$}
\put(-210,5) {$K$}
\put(-135,55) {(b)}
\put(-115,25) {$S$}
\put(-128,5) {$K$}
\put(-60,5) {$K_0$}
\end{picture}
\caption{(a) crossing disk $D$, (b) The surface $S$ gives rises to an annulus bounding $K_0$.} 
\label{fig:crossing}
\end{center}
\end{figure} 
\end{proof}

A repeated use of Theorem \ref{theorem:Gordian} gives the following obstruction for a genus one knot $K$ to admit a genus non-increasing unknotting sequence of length $n$.
\begin{corollary}
\label{cor:obstruction}
Let $K$ be a genus one knot and let 
\[ K=K_n \stackrel{\varepsilon_n}{\longrightarrow} K_{n-1} \stackrel{\varepsilon_{n-1}}{\longrightarrow} \cdots  \stackrel{\varepsilon_2}{\longrightarrow} K_1 \stackrel{\varepsilon_1}{\longrightarrow} U \]
be a genus non-increasing unknotting sequence of $K$.
Then there exists $k_i \in \Z$ and $f_{i} \in \Z[v^{2},v^{-2}]$ that satisfies the following properties
\begin{itemize}
\item[(i)]
$ \displaystyle p^{0}_K(v) = v^{2\sum_{i=1}^{n}\varepsilon_i} + (1-v^{2}) \sum_{i=1}^{n} \varepsilon_iv^{2k_i}f_i(v)^2 $
\item[(ii)] $f_i(1)=1, f'_i(1)=0$
\end{itemize}
\end{corollary}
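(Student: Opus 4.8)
The plan is to prove the statement by induction on the length $n$, in the slightly more general form allowing $g(K)\le 1$ (so that the unknot is permitted as an intermediate term of the sequence). The base case $n=0$ is $K=U$, where $p^{0}_U(v)=1=v^{0}$ and both sums are empty. For the inductive step I would remove the top crossing change $K=K_n \stackrel{\varepsilon_n}{\longrightarrow} K_{n-1}$. Since the sequence is genus non-increasing we have $g(K_{n-1})\le g(K_n)\le 1$, so $K_{n-1} \stackrel{\varepsilon_{n-1}}{\longrightarrow}\cdots\longrightarrow U$ is a genus non-increasing unknotting sequence of $K_{n-1}$ of length $n-1$ to which the inductive hypothesis applies, giving $k_1,\dots,k_{n-1}\in\Z$ and $f_1,\dots,f_{n-1}$ with $f_i(1)=1$, $f_i'(1)=0$ and
\[ p^{0}_{K_{n-1}}(v)= v^{2\sum_{i=1}^{n-1}\varepsilon_i}+(1-v^{2})\sum_{i=1}^{n-1}\varepsilon_i v^{2k_i}f_i(v)^{2}. \]
It then remains to compare $p^{0}_{K_n}$ with $p^{0}_{K_{n-1}}$ and to record the $i=n$ contribution.

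In the generic case $K_n\neq K_{n-1}$ the crossing at which the change occurs is non-nugatory, and $K_n\neq U$ (otherwise $g(K_{n-1})\le 0$ would force $K_{n-1}=U=K_n$), so $g(K_n)=1$; hence Theorem \ref{theorem:Gordian} applies and produces $f_n\in\Z[v^{2},v^{-2}]$ with $f_n(1)=1$, $f_n'(1)=0$ and
\[ v^{-\varepsilon_n}p^{0}_{K_n}(v)-v^{\varepsilon_n}p^{0}_{K_{n-1}}(v)=\varepsilon_n(v^{-1}-v)v^{2\varepsilon_n(a_2(K_n)-a_2(K_{n-1}))}f_n(v)^{2}. \]
Multiplying through by $v^{\varepsilon_n}$ and using $v^{\varepsilon_n}(v^{-1}-v)=(1-v^{2})v^{\varepsilon_n-1}$ (valid for $\varepsilon_n=\pm1$, the exponent $\varepsilon_n-1$ being even) turns this into $p^{0}_{K_n}=v^{2\varepsilon_n}p^{0}_{K_{n-1}}+\varepsilon_n(1-v^{2})v^{2k_n}f_n(v)^{2}$ for a suitable $k_n\in\Z$. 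Substituting the inductive formula for $p^{0}_{K_{n-1}}$: the leading term becomes $v^{2\sum_{i=1}^{n}\varepsilon_i}$, the term just displayed supplies the $i=n$ summand, and the extra factor $v^{2\varepsilon_n}$ in front of the old sum is absorbed by replacing each $k_i$ ($i<n$) with $k_i+\varepsilon_n$. This is exactly the asserted formula for $K_n$.

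The remaining case is $K_n=K_{n-1}$; this occurs for a crossing change at a nugatory crossing and, more prosaically, once the sequence has already reached the unknot. Here $p^{0}_{K_n}=p^{0}_{K_{n-1}}$ but Theorem \ref{theorem:Gordian} is unavailable, so one proceeds directly: writing $S=\sum_{i=1}^{n}\varepsilon_i$ one has the elementary identity $v^{2(S-\varepsilon_n)}=v^{2S}+\varepsilon_n(1-v^{2})v^{2k_n}$ with $k_n=S-1$ when $\varepsilon_n=+1$ and $k_n=S$ when $\varepsilon_n=-1$, which (with $f_n\equiv1$, and no change to the earlier $k_i$) converts the inductive formula for $p^{0}_{K_{n-1}}$ into the one for $p^{0}_{K_n}$. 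I expect this degenerate case to be the only genuine subtlety in the argument: it is the cost of allowing unknotting sequences that are not length-minimal, and what makes it work is precisely that one crossing change changes the exponent $\sum\varepsilon_i$ by exactly $\pm1$, so the discrepancy between $v^{2(S-\varepsilon_n)}$ and $v^{2S}$ is automatically divisible by $1-v^{2}$. If one only cared about minimal-length sequences, as in the application to Theorem \ref{theorem:main}, this case would not arise and the proof would be a pure telescoping of Theorem \ref{theorem:Gordian}.
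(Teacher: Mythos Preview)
Your proof is correct and follows precisely the approach the paper indicates: the paper does not spell out a proof of this corollary beyond the phrase ``a repeated use of Theorem~\ref{theorem:Gordian},'' and your induction on $n$ is exactly how that repeated use is carried out, including the absorption of the factor $v^{2\varepsilon_n}$ into the exponents $k_i$. Your treatment of the degenerate case $K_n=K_{n-1}$ (nugatory crossings, or steps after the unknot has been reached) is a detail the paper leaves implicit; your handling of it via $f_n\equiv 1$ is correct and necessary for the statement as written, since the corollary does not assume the sequence is reduced.
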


The corollary can be used to give a restriction for $u^{g}(K)$ of genus one knots. 
Among them, when we add further restriction that the unknotting sequence is positive, Corollary \ref{cor:obstruction} (i) says that 
$(p^{0}_K(v^2) - v^{2\sum_{i=1}^{n}\varepsilon_i})\slash (1-v^{2})$ is a sum of perfect squares hence it has the following properties.

\begin{lemma}
\label{lemma:positivity}
Let $g(v) = c_m v^m + c_{m+1}v^{m+1} + \cdots + c_M v^{M} \in \Z[v,v^{-1}]$ ($c_m,c_M \neq 0$).
If $g(v)=\sum_{i=1}^{n} f_i(v)^2$ for some $f_i \in \Z[v,v^{-1}]$ and $n \geq 1$, then 
$c_m$ and $c_M$ are sum of at most $n$ perfect squares.
In particular,
\begin{itemize}
\item[(i)] $c_m,c_M>0$.
\item[(ii)] If $c_m$ or $c_M$ is not a perfect square, then $n\geq 2$.
\item[(iii)] If the prime decomposition of $c_m$ or $c_M$ contains a factor of $p^{k}$ with prime $p \equiv 3 \pmod{3}$ and odd $k$, then $n\geq 3$.
\end{itemize}
\end{lemma}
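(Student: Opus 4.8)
The plan is to reduce everything to a single clean fact about leading and trailing coefficients of sums of squares of Laurent polynomials, and then quote classical number theory (sums of squares / Legendre's three-square theorem) for the three itemized consequences. First I would observe that, after multiplying $g(v)$ and each $f_i(v)$ by a common power of $v$, there is no loss in assuming all the $f_i$ are honest polynomials in $\Z[v]$; this does not affect the claim since it only shifts all exponents by a fixed integer and leaves the extreme coefficients $c_m, c_M$ unchanged (the lowest and highest surviving coefficients of $g$). So write $f_i(v) = a_i v^{d_i} + \cdots + b_i v^{e_i}$ with $a_i, b_i \neq 0$, where $d_i = \min\{\deg\}$ and $e_i = \max\{\deg\}$ of $f_i$.

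The key step is: the lowest-degree term of $f_i(v)^2$ is $a_i^2 v^{2d_i}$, and the highest-degree term is $b_i^2 v^{2e_i}$. Let $d = \min_i d_i$ and $e = \max_i e_i$. Then in $\sum_i f_i(v)^2$ the coefficient of $v^{2d}$ is $\sum_{i : d_i = d} a_i^2$ and the coefficient of $v^{2e}$ is $\sum_{i : e_i = e} b_i^2$; and there are no terms of degree below $2d$ or above $2e$. Matching with $g(v)$ forces $m = 2d$, $M = 2e$, $c_m = \sum_{i : d_i = d} a_i^2$, and $c_M = \sum_{i : e_i = e} b_i^2$. Since there are at most $n$ indices $i$ in each of these sums, $c_m$ and $c_M$ are each a sum of at most $n$ nonzero integer squares (nonzero because $a_i, b_i \neq 0$). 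This is the whole content of the lemma; the main subtlety — and the step I'd watch most carefully — is making sure that no cancellation among the squares can occur at the extreme degrees, which is exactly why we only need to look at a single $a_i^2$ (resp. $b_i^2$) from each contributing $f_i$: squares of the extreme coefficients are strictly positive, so the sums $\sum a_i^2$ and $\sum b_i^2$ cannot vanish or shrink, and in particular $m$ and $M$ are forced to be even and the extreme coefficients are genuinely positive sums of squares.

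The three consequences are then immediate. For (i), a nonempty sum of nonzero squares is positive, so $c_m, c_M > 0$. For (ii), if $c_m$ (resp. $c_M$) is not a perfect square then it cannot be written as a single square, so the contributing sum has at least two terms, forcing $n \geq 2$. For (iii), by Legendre's three-square theorem a positive integer is a sum of three integer squares unless it is of the form $4^a(8b+7)$; more to the point here, if a prime power $p^k$ with $p \equiv 3 \pmod 4$ and $k$ odd divides $c_m$ (resp. $c_M$) exactly, then $c_m$ is not a sum of two squares (a positive integer is a sum of two squares iff every prime $\equiv 3 \pmod 4$ occurs to an even power in it), so at least three terms are needed and $n \geq 3$. (I note the statement as printed says ``$p \equiv 3 \pmod 3$'' and ``contains a factor of $p^k$''; I would read this as the standard condition — $p \equiv 3 \pmod 4$ appearing to an odd power — and phrase it that way in the write-up.)
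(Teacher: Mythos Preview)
Your proof is correct and follows exactly the same approach as the paper, whose proof is the single sentence ``This follows from an observation that $c_m$ (resp.\ $c_M$) is a sum of at most $n$ perfect squares.'' You have simply filled in the details of that observation (including correctly reading the evident typo $p\equiv 3 \pmod 3$ as $p\equiv 3 \pmod 4$).
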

\begin{proof}
This follows from an observation that $c_m$ (resp. $c_M$) is a sum of at most $n$ perfect squares.
\end{proof}

 This positivity phenomenon allows us to relate the HOMFLY polynomial (the  zeroth coefficient polynomial) and $u^{g}_{++}(K)$.

\begin{proof}[Proof of Theorem \ref{theorem:main}]
We put
\[ p^{0}_K(v) = h_{m,0}v^{2m}+ \cdots +h_{M,0}v^{2M}\]
Assume that assume that $K$ has a genus non-increasing positive unknotting sequence of length $n$. Then by Corollary \ref{cor:obstruction} (iii)
\[ \frac{p^{0}_K(v)-v^{2n}}{1-v^{2}} = \sum_{i=1}v^{2k_i} f_i(v)^2 = \sum_{i=1}(v^{k_i}f_i(v))^2.\]

(i-a) If $n < m$, then
\[ p^{0}_K(v)-v^{2n} = -v^{2n} +  h_{m,0}v^{2m}+ \cdots +h_{M,0}v^{2M} \]
so
\begin{align*}
\sum_{i=1}v^{2k_i} f_i(v)^2 &= \frac{p^{0}_K(v)-v^{2n}}{1-v^{2}} = \frac{-v^{2n} + v^{2m}+h_{m,0}v^{2m}+ \cdots +h_{M,0}v^{2M}}{1-v^{2}} \\
&= -v^{2n} + (\deg >2n \mbox{ terms})
\end{align*}
By Lemma \ref{lemma:positivity} this is impossible.\\

(i-b,i-c) By (i-a) we may assume that $n\geq m$. Then
\[ \frac{p^{0}_K(v)-v^{2n}}{1-v^{2}} = h'_{m,0} v^{2m} + \cdots ,\quad h'_{m,0}=  \begin{cases} h_{m,0} & n\neq m \\
h_{m,0}-1 & n=m
\end{cases}
\]
Thus when $h_{m,0}<0$,
 \begin{align*}
\sum_{i=1}v^{2k_i} f_i(v)^2 &= \frac{p^{0}_K(v)-v^{2n}}{1-v^{2}} = h'_{m,0}v^{2m} + (\deg >2m \mbox{ terms})
\end{align*}
for some $h'_{m,0}<0$. This cannot happen by Lemma \ref{lemma:positivity}.
Furthermore if $h_{m,0}=1$ and $n=m$, then 
 \begin{align*}
\sum_{i=1}v^{2k_i} f_i(v)^2 &= \frac{p^{0}_K(v)-v^{2n}}{1-v^{2}} = \frac{h_{m',0}v^{2m'}+ \cdots + h_{M,0}v^{2M}}{1-v^{2}} \\
&= h_{m',0}v^{2m'} + (\deg >2m' \mbox{ terms})
\end{align*}
Since we are assuming $h_{m',0}<0$, this is impossible by Lemma \ref{lemma:positivity}. This proves that $n\geq m+1$.\\

(ii-a) If $n < M$,
\begin{align*}
\sum_{i=1}v^{2k_i} f_i(v)^2 &= \frac{p^{0}_K(v)-v^{2n}}{1-v^{2}} = \frac{ (\deg <2M \mbox{ terms}) + h_{M,0}v^{2M}}{1-v^{2}} \\
&= (\deg <2M \mbox{ terms}) - h_{M,0}v^{2M-2}
\end{align*}
Since $h_{M,0}>0$, this is impossible by Lemma \ref{lemma:positivity} so we conclude $n\geq M$.

(ii-b) By (ii-a) we may assume that $n\geq M$. If $n=M$,
\begin{align*}
\sum_{i=1}v^{2k_i} f_i(v)^2 &= \frac{p^{0}_K(v)-v^{2n}}{1-v^{2}} = \frac{\cdots + (h_{M,0}-1)v^{2M}}{1-v^{2}} \\
&= (\deg <2M \mbox{ terms}) + (-h_{M,0}+1) v^{2M-2}
\end{align*}
since $h_{M,0}>1$, this is impossible by Lemma \ref{lemma:positivity}.

\end{proof}


\begin{thebibliography}{1}
\bibitem[Al]{al}
A.\ Alishahi, 
{\em Unknotting number and Khovanov homology.}
Pacific J. Math.301(2019), no.1, 15--29.

\bibitem[AE]{ae}
A.\ Alishahi and E.\ Eftekhary, 
{\em Knot Floer homology and the unknotting number.}
Geom. Topol.24(2020), no.5, 2435--2469.

\bibitem[BF]{bf}
M.\ Borodzik and S.\ Friedl, 
{\em On the algebraic unknotting number.}
Trans. London Math. Soc.1(2014), no.1, 57--84.

\bibitem[BF2]{bf2}
M.\ Borodzik and S.\ Friedl, 
{\em The unknotting number and classical invariants, I.}
Algebr. Geom. Topol.15(2015), no.1, 85--135.


\bibitem[BH]{bh} M.\ Brittenham and S.\ Hermiller, 
{A counterexample to the Bernhard-Jablan unknotting conjecture.}
Exp. Math.30(2021), no.4, 547--556.

\bibitem[CL]{cl}
T. Cochran and W. B. R. Lickorish.
{\em Unknotting information from 4-manifolds.}
Trans. Amer. Math. Soc.297(1986), no.1, 125--142.

\bibitem[FK]{fl}
P.\ Feller and L.\ Lewark,
{\em On classical upper bounds for slice genera.}
Selecta Math. (N.S.)24(2018), no.5, 4885--4916.

\bibitem[It]{it}
T.\ Ito,
{\em An obstruction of Gordian distance one and cosmetic crossings for genus one knots,}
New York J. Math.28(2022), 175--181.

\bibitem[KL]{kl}
E.\ Kalfagianni and X.-S. Lin,
{\em Knot adjacency, genus and essential tori,}
Pacific J. Math. 228 (2006), 251--275.




\bibitem[Ka]{ka} A. Kawauchi, 
{\em On coefficient polynomials of the skein polynomial of an oriented link,}
Kobe J. Math. 11 (1994), no. 1, 49--68.

\bibitem[KLMMMS]{klmmms}
M.\ Kegel, L.\ Lewark, N.\ Manikandan, F. Misev, L. Mousseau and M. Silvero,
{\em  On unknotting fibered positive knots and braids,}
arXiv:2312.07339v1.


\bibitem[LM]{lm} C.\ Livingston and A.\ Moore, 
{\em KnotInfo: Table of Knot Invariants,}
URL: \url{knotinfo.math.indiana.edu}, (June 21, 2024).

\bibitem[Ow]{ow}
B.\ Owens, 
{\em Unknotting information from Heegaard Floer homology.}
Adv. Math. 217(2008), no.5, 2353--2376.

\bibitem[OS]{os}
P.\ Ozsv\'ath and Z.\ Szab\'o, 
{\em Knots with unknotting number one and Heegaard Floer homology.}
Topology4 4(2005), no.4, 705--745.



\bibitem[Tr]{tr} P.\ Traczyk, 
{\em A criterion for signed unknotting number.}
Low-dimensional topology (Funchal, 1998), 215--220.
Contemp. Math., 233
American Mathematical Society, Providence, RI, 1999.

\end{thebibliography}
\end{document}